\documentclass[20pt]{article}

\input epsf
\usepackage{amssymb}
\usepackage{amsfonts}
\usepackage{amsmath}
\usepackage{mathrsfs}
\usepackage{amsthm}

\numberwithin{equation}{section}
\newtheorem{theorem}{Theorem}[section]
\newtheorem{lemma}{Lemma}[section]

\newtheorem{corollary}{Corollary}[section]

\newtheorem{remark}{Remark}[section]

\begin{document}

 \baselineskip 20pt

\textwidth=146truemm \textheight=207truemm

\begin{center}
{\bf \Large The asymptotic properties of Eulerian numbers and
refined Eulerian numbers: A Spline perspective \footnote{This work
was partially Supported by The National Natural Science Foundation
of China (NO.60373093, NO.60533060, NO.10801024, U0935004 and
NO.10726068.) and the Innovation Fundation of the Key Laboratory of
High Temperature Gasdynamics of Chinese Academy of Sciences. This
research project was carried while the second author visited
Michigan State University in 2008-09 under the support of State
Scholarship Fund by China Scholarship Council in 2008. }}
\end{center}
\begin{center}
{Renhong Wang \footnote{Email: renhong@dlut.edu.cn}    Yan Xu
\footnote{Email: yanxudlut@yahoo.cn}

 {\small Institute of
Mathematical Sciences, Dalian University of Technology,\newline
 Dalian, 116024, China} }

\end{center}

\begin{abstract}In this paper, the asymptotic formulas for Eulerian numbers, refined
Eulerian numbers and the coefficients of descent polynomials are
obtained directly from the spline interpretations of these numbers.
Having related these numbers directly to B-splines \cite{xu}, we
   can take advantage of many powerful spline techniques to derive various
   results of these numbers.
 The asymptotic formulas for the Eulerian numbers $A_{d, k}$ agree with the
 previously known results which were given by L. Carlitz et al.(1972)\cite{Carliz} and S.Tanny (1973) \cite{Tanny},
  but the convergence order is much better. We also give the asymptotic representations
  of refined Eulerian numbers which is in terms of the Hermite polynomials.

\end{abstract}

{\sl Keywords:} B-splines; Eulerian numbers; Refined Eulerian
numbers; Descent polynomials; Asymptotic approximation.

\vspace{0.5cm}
\section{ Introduction}
Eulerian number, denoted here by $A_{d,k}$ is the number of
permutations in the symmetric group $S_d$ that have exactly $k-1$
descents. They play important roles in enumerative combinatorics,
geometry \cite{l}, statistical applications \cite{Carliz,Tanny}, and spline theory\cite{scheonberg,xu}.
The asymptotic properties of Eulerian numbers
was examined by S. Tanny \cite{Tanny} and L. Carlitz et. al.\cite{Carliz} with the help of central Limit theorem of probability theory.
They showed that the Eulerian numbers approximate to the Gaussian function with the convergence order $O(d^{-\frac{3}{4}})$. Nevertheless, in this paper, we obtain a much better convergence order $O(d^{-\frac{3}{2}})$ with the help of the spline theory.

The refined Eulerian number $\mathbf{A}_{d,k,j}$ is the number of
permutations in the symmetric group $S_d$ with $k$ descents and
ending with the element $j$. Brenti and Welker \cite{Brenti} gave a precise description of the h-vector
of the barycentric subdivision of a simplicial complex in terms of the
h-vector of the original complex and the refined Eulerian numbers. Later, Martina Kubitzke and Eran Nevo \cite{Martina}
derived new inequalities on $\mathbf{A}_{d,k,j}$ by the algebraic method. A result due to Ehrenborg,
Readdy and  Steingr\'{\i}msson \cite{E} gave a geometry interpolation of $\mathbf{A}_{d,k,j}$ as the mixed volumes of two adjacent slices from the unit cube. In \cite{xu} Renhong,W., Yan, X. and Zhiqiang, X. gave a spline interpolation of the $\mathbf{A}_{d,k,j}$ by the geometry interpolation and the spline theory. Further more, they gave the explicit and recurrence representation of $\mathbf{A}_{d,k,j}$. In \cite{mark}, the experimental evidence suggested that the refined Eulerian number $\mathbf{A}_{d,k,j}$ is not exactly normal approximation as $d$ grows large. In this paper, we give the asymptotic representations of refined Eulerian numbers in terms of the Hermite polynomials.

Recently, Steingr\'{\i}msson \cite{Stein} generalized the definition of descents and excedances to the elements (called indexed permutations, colored permutations or $r$-signed permutations see\cite{E,Stein,Bagno2,Bagno1} ) of groups $S^n_d:= Z_n \wr S_d$, where $\wr$ is wreath product with respect to the usual action of $S_d$ by permutations of $[d]$.
The number of indexed permutations in $S^n_d$
with $k$ descents is denoted by $D(d,n,k)$.
 In \cite{Stein}, using the work of Brenti \cite{Benti2}, Steingr\'{\i}msson \cite{Stein} showed that the numbers $D(d,n,k)$ are \textit{unimodal}.
Moreover, In\cite{xu}, using the spline theory, the authors gave the explicit expression of $D(d,n,k)$ and showed the numbers $D(d,n,k)$ are log-concave. In this paper, we show that $D(d,n,k)$ is approximately normal in terms of the spline interpretation of $D(d,n,k)$.

The paper is organized as follows. In Section 2, after recalling some necessary definitions and
notations, we show the connection between the \textit{B-splines} and these combinatorial numbers.
In Section 3, we give the main results of this paper. The  results show the development of
various asymptotic properties of Eulerian numbers, refined Eulerian
numbers and the coefficients of descent polynomials, which are
obtained directly from the spline interpretations of these numbers. We prove the main results in Section 4.
\section{definitions and notations}

Eulerian number, denoted here by $A_{d,k}$ is the number of
permutations in the symmetric group $S_d$ that have exactly $k-1$
descents. An recurrence formula for $A_{d,k}$ is
\begin{equation}
A_{d,k+1} =(k+1)A_{d-1,k+1}+(d-k)A_{d-1,k}
\end{equation}
with the boundary conditions
$$
A_{0,0}=1, A_{d,0}=0, d>0.
$$
An explicit formula for $A_{d,k}$ is
\begin{equation}\label{eq:EN}
A_{d,k}=\sum_{i=0}^k\binom{d+1}{i}(-1)^i(k-i)^{d}
\end{equation}
which can be easily verified using the above recurrence.

Refined Eulerian number $\mathbf{A}_{d,k,j}$ is the number of
permutations in the symmetric group $S_d$ with $k$ descents and
ending with the element $j$.
The recurrence formulas for $\mathbf{A}_{d,k,j}$ are \cite{xu}
\begin{equation}\label{eq:recrefine}
\begin{split}
&\mathbf{A}_{d+1,k,d-j+1}=(k+1)\mathbf{A}_{d,k,d-j}
+(d-k)\mathbf{A}_{d,k-1,d-j},\\
&\mathbf{A}_{d+1,k,d-j+1}=k\mathbf{A}_{d,k,d-j+1}
+(d-k+1)\mathbf{A}_{d,k-1,d-j+1}.\\
\end{split}
\end{equation}
An explicit formula for $\mathbf{A}_{d,k,j}$ is \cite{xu}
\begin{eqnarray*}
\mathbf{A}_{d,k,j}&=&\sum_{i=0}^{k}\binom{d}{i}(-1)^i(k-i)^{d-j}(k-i+1)^{j-1}.\\
 \end{eqnarray*}

Descent polynomials, denoted by $D^n_d(t)$, are defined as
$$
D^n_d(t)=\sum_{k=0}^d D(d,n,k)t^k,
$$
where $D(d,n,k)$  is the number of indexed permutations in $S^n_d$
with $k$ descents. The indexed permutation of length $d$ and with
indices in $\{0,1,\ldots,n-1\}$ is an ordinary permutation in the
symmetric group $S_d$ where each letter has been assigned an integer
between $0$ and $n-1$. Indexed permutations, or $r$-signed
permutations, are a generalization of permutations (see
\cite{E,Stein}). We will follow the notation in \cite{Stein}. The
set of all such indexed permutations is denoted by $S^n_d$.
The numbers $D(d,n,k)$ satisfy a simple three-term
recurrence\cite{Stein},
\begin{equation}
 D(d,n,k)=(nk+1)D(d-1,n,k)+(n(d-k)+(n-1))D(d-1,n,k-1).
\end{equation}
And also has an explicit formula \cite{xu}:
\begin{eqnarray*}
D(d,n,k)=\sum_{i=0}^{k}\binom{d+1}{i}(-1)^i(n(k-i)+1)^d.\\
 \end{eqnarray*}
We turn to the definitions of B-splines and Hermite polynomials,
then show the relations
 among Eulerian numbers, the refined Eulerian numbers, the descent polynomials and B-splines.

Let $p\in \mathbb{R}^{\geq 1}$ and $L^p(\mathbb{R})$ as usual denote
the set
\begin{eqnarray*}
 L^p(\mathbb{R}) = \{ f : \mathbb{R} \rightarrow \mathbb{C}|
f & measurable,& \int_{-\infty}^{+\infty}\left|f(t)\right|^p dt <
\infty \}.
\end{eqnarray*}
For $p = 2$ and $f, g \in L^2(\mathbb{R})$ define the inner product
\begin{equation*}
\left\langle f,g\right\rangle
:=\int_{-\infty}^{+\infty}f(t)\overline{g(t)} dt
\end{equation*}
and the norm
$$ \left\| f\right\|:=\sqrt{\langle f,f \rangle}$$
making $L^2(\mathbb{R})$ to a Hilbert space.

For $f \in  L^1(\mathbb{R})$ define the Fourier transform $f^\wedge$
and the inverse Fourier transform $f^\vee$ as
\begin{eqnarray*}
f^\wedge(\omega) \,&:=&\, \int_{-\infty}^{+\infty} f (t) e^{-i\omega
t}dt,\\
 f ^\vee(\omega) \,&:=&\, \frac{1} {2\pi}
\int_{-\infty}^{+\infty} f (t) e^{-i\omega t} dt.\\
\end{eqnarray*}
This definition can be extended to functions $f\in L^2(\mathbb{R})$,
see for example \cite{Stein W}.

{\it  B-splines with order $d$}, which is denoted as $B_d(\cdot)$,
is defined by the induction as
$$
B_1(x)\,\,=\,\,\begin{cases}
1 & \text{if} \>\> x\in[0,1),\\
0 & \text {otherwise},
\end{cases}
$$
and for $d\geq 1$
$$
B_d\,\,=\,\, B_1*B_{d-1},
$$
where $*$ denotes the operation of convolution which is defined:
$$
( f\ast g)(t) := \int_{-\infty}^{+\infty} f (t -y)g(y)dy
$$
for $f $ and $g$ in $L^2(\mathbb{R})$.

Then $B_d$ has the compact support $[0, d]$ and is in
$C^{d-1}(\mathbb{R})$. A well known explicit formula for $B_d(\cdot)$ is
\begin{equation}\label{eq:bspline}
B_d(x)=\frac{1}{(d-1)!}\sum_{i=0}^{d}(-1)^i\binom{d}{i}(x-i)_+^{d-1}
\end{equation}
where the one-sided power function is defined by:
\begin{equation*}
x_+^d=\left\{\begin{array}{lll} x^d & &
 x \geq 0, \\
0& & otherwise.
\end{array} \right.
\end{equation*}
And the recurrence relation is
\begin{equation}\label{eq:recbspline}
B_d(x)=\frac{x}{d-1}B_{d-1}(x)+\frac{d-x}{d-1}B_{d-1}(x-1).
\end{equation}
For all $B_d$, $d\geq1$, it holds $B_d\in L^1(\mathbb{R})\bigcap
L^2(\mathbb{R}) $  and
\begin{equation}
B^\wedge_d(\omega)=sinc^{d}\left(\frac{\omega}{2}\right)
\end{equation}
where $sinc(t)$ denotes:
\begin{equation}
sinc(t):=\left\{\begin{array}{lll}
\frac{\sin{t}}{t} & & t\neq 0, \\
1& & t=0.
\end{array} \right.
\end{equation}
B-splines play important roles in approximation, computer-aid design(CAD), signal
processing and discrete geometry. They have been well developed during the past few decades. For extensive monographs see
\cite{deboorBspline,zhiqiang}.

{\it  Hermite polynomials $H_n(x)$ with degree $d$}, are set of
orthogonal polynomials over the domain $(-\infty,+\infty)$  with
weighting function $e^{-x^2}$. The Hermite polynomials are defined
by
\begin{equation}\label{H}
H_n(x)=(-1)^n e^{\frac{x^2}{2}}\frac{d^n}{dx^n}e^{\frac{-x^2}{2}}
\end{equation}
and satisfy
\begin{eqnarray*}
H_n(x)\,&=&\,\sum_{k=0}^{[\frac{n}{2}]}\frac{(-1)^k
n!(2x)^{n-2k}}{k!(n-2k)!}.\\
\end{eqnarray*}
Being the limiting case of several families of classical orthogonal
polynomials, they are of fundamental importance in asymptotic
analysis.

 In
this paper, we shall be concerned primarily with the development of
various asymptotic properties of Eulerian numbers, refined Eulerian
numbers and the coefficients of descent polynomials, which are
obtained directly from the spline interpretations of these numbers
\cite{xu}. The theory of B-splines is a well developed area of
applied numerical analysis and interpolation theory. Having related
these numbers directly to B-splines \cite{xu}, we can take advantage
of many powerful spline techniques to derive various results of
these numbers. It is precisely this fact which motivates the need
for more purely spline interpretations of these numbers to which we
now turn. To state conveniently,  we use $[\lambda^j]f(\lambda)$ to
denote the coefficient of $\lambda^j$ in $f(\lambda)$ for any given
power series $f(\lambda)$.

\begin{lemma}\label{le:1}\cite{xu}
$$
  A_{d,k} \,=\, d!\cdot B_{d+1}(k);\leqno{\rm (i)}
$$
$$
 D(d,n,k)\,=\, d!\cdot n^d\cdot B_{d+1}\left(k+\frac{1}{n}\right);\leqno{\rm (ii)}
$$
$$
   \mathbf{A}_{d+1,k,d-j+1}\, =\, d!\cdot [
\lambda^j]\left(
(\lambda+1)^dB_{d+1}\left(k+\frac{1}{\lambda+1}\right)\right)/{\binom{d}{j}},\,\,
\lambda\geq 0.\leqno{\rm (iii)}
$$
\end{lemma}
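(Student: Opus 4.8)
The plan is to prove all three identities in one stroke, by feeding the appropriate argument into the closed form \eqref{eq:bspline} of the B-spline $B_{d+1}$ and then matching the outcome, term by term, against the explicit formulas for $A_{d,k}$, $D(d,n,k)$ and $\mathbf{A}_{d,k,j}$ recorded in Section~2. The one remark that drives everything is this: for a \emph{nonnegative integer} $k$ (and $d\ge1$) the one-sided powers $(k-i)_+^{d}$ appearing in \eqref{eq:bspline} (read with $d+1$ in place of $d$) vanish when $i>k$ and equal the ordinary powers $(k-i)^{d}$ when $i\le k$; consequently the sum over $i=0,\dots,d+1$ collapses onto $i=0,\dots,k$, which is exactly the range occurring in the combinatorial formulas.

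For (i) this is immediate: reading \eqref{eq:bspline} with $d+1$ for $d$ and evaluating at the integer $x=k$ gives $d!\,B_{d+1}(k)=\sum_{i=0}^{k}(-1)^i\binom{d+1}{i}(k-i)^{d}$, which is \eqref{eq:EN}. For (ii) I would first use the positive homogeneity of the one-sided power, $n^{d}(y)_+^{d}=(ny)_+^{d}$ for $n>0$, with $y=k-i+\frac{1}{n}$; this recasts $d!\,n^{d}B_{d+1}\!\Big(k+\frac{1}{n}\Big)$ as $\sum_{i=0}^{d+1}(-1)^i\binom{d+1}{i}\big(n(k-i)+1\big)_+^{d}$. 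Since $n(k-i)+1>0$ precisely when $i\le k$ (for integer $n\ge1$), the sum collapses to $\sum_{i=0}^{k}(-1)^i\binom{d+1}{i}\big(n(k-i)+1\big)^{d}$, which is the explicit formula for $D(d,n,k)$ stated in Section~2.

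The substantive part is (iii). Here I would substitute $x=k+\frac{1}{\lambda+1}$ into \eqref{eq:bspline} and multiply through by $(\lambda+1)^{d}$; the identity $(\lambda+1)^{d}(y)_+^{d}=\big((\lambda+1)y\big)_+^{d}$ --- legitimate exactly because the hypothesis $\lambda\ge0$ forces $\lambda+1>0$ --- turns the $i$-th summand into $\big((k-i)\lambda+(k-i+1)\big)_+^{d}$. For $i\le k$ the base is positive, so the subscript $+$ may be dropped; for $i>k$ the base $(k-i)\lambda+(k-i+1)$ is $\le0$ for every $\lambda\ge0$ (it equals $0$ only in the corner case $i=k+1,\ \lambda=0$, where its $d$-th power still vanishes), so that term drops out. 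Hence
\[
d!\,(\lambda+1)^{d}B_{d+1}\!\Big(k+\frac{1}{\lambda+1}\Big)=\sum_{i=0}^{k}(-1)^i\binom{d+1}{i}\big((k-i)\lambda+(k-i+1)\big)^{d},
\]
an honest polynomial in $\lambda$ of degree at most $d$, so $[\lambda^{j}]$ of it makes sense for $0\le j\le d$. Expanding each bracket by the binomial theorem, $\big((k-i)\lambda+(k-i+1)\big)^{d}=\sum_{j=0}^{d}\binom{d}{j}(k-i)^{j}(k-i+1)^{d-j}\lambda^{j}$, and reading off the coefficient of $\lambda^{j}$ on both sides gives $d!\,[\lambda^{j}]\Big((\lambda+1)^{d}B_{d+1}(k+\tfrac{1}{\lambda+1})\Big)=\binom{d}{j}\sum_{i=0}^{k}(-1)^i\binom{d+1}{i}(k-i)^{j}(k-i+1)^{d-j}$. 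Dividing by $\binom{d}{j}$ yields exactly the explicit formula for $\mathbf{A}_{d,k,j}$ of Section~2 with $d$ replaced by $d+1$ and $j$ by $d-j+1$ (the exponents there becoming $(d+1)-(d-j+1)=j$ and $(d-j+1)-1=d-j$), which is (iii).

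I expect the only delicate points to be the truncation steps --- the boundary term $i=k+1,\ \lambda=0$ in (iii) and the sign analysis of $n(k-i)+1$ in (ii) --- together with the binomial bookkeeping in (iii) that conjures up the extraneous factor $\binom{d}{j}$ (precisely why that factor is divided out in the statement). An alternative, somewhat longer proof runs by induction on $d$: the B-spline recurrence \eqref{eq:recbspline} evaluated at $x=k$, at $x=k+\frac{1}{n}$, and at $x=k+\frac{1}{\lambda+1}$ reproduces, respectively, the recurrence for $A_{d,k}$, the three-term recurrence for $D(d,n,k)$, and the pair \eqref{eq:recrefine} for $\mathbf{A}_{d,k,j}$, the base cases being settled from $B_1=\chi_{[0,1)}$; but the closed-form argument above is cleaner and handles all three claims at once.
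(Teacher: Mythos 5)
Your argument is correct, and it is worth noting that the paper itself offers no proof of this lemma at all: it is imported wholesale from the reference \cite{xu}, and the only trace of it inside this paper is the identity (\ref{eq lem1}) used in the proof of Theorem \ref{ThREN}, which is just a restatement of part (iii). So there is nothing in the text to compare your proof against; what you have supplied is a self-contained verification that the paper omits. Your route --- evaluate the closed form (\ref{eq:bspline}) (with $d+1$ in place of $d$) at $k$, $k+\tfrac1n$, $k+\tfrac1{\lambda+1}$, use positive homogeneity of $y\mapsto y_+^d$ to clear denominators, observe that the truncated powers kill exactly the terms $i>k$, and match against the explicit sums of Section~2 --- checks out in every detail, including the boundary cases ($i=k$ in (i), $i=k+1$ with $n=1$ in (ii), and $i=k+1$, $\lambda=0$ in (iii), where the base is $0$ rather than negative but the $d$-th power still vanishes) and the binomial bookkeeping that produces and then cancels the factor $\binom{d}{j}$; I verified that the resulting coefficient of $\lambda^j$ is $\binom{d}{j}\sum_{i=0}^{k}(-1)^i\binom{d+1}{i}(k-i)^{j}(k-i+1)^{d-j}$, which after the substitution $d\mapsto d+1$, $j\mapsto d-j+1$ is exactly the displayed formula for $\mathbf{A}_{d+1,k,d-j+1}$. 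The one caveat is that your proof is only as strong as its inputs: the explicit formulas for $D(d,n,k)$ and $\mathbf{A}_{d,k,j}$ are themselves stated in Section~2 without proof and attributed to \cite{xu}, so strictly speaking you have reduced the lemma to those formulas rather than proved it from first principles; your sketched alternative via the recurrence (\ref{eq:recbspline}) against the recurrences (\ref{eq:recrefine}) and the three-term recurrence for $D(d,n,k)$ would avoid that dependency and is closer in spirit to how \cite{xu} presumably proceeds.
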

\section{Main results}
After the necessary definitions have been provided, we can come to
the main results of this article.

\begin{theorem}\label{limtB} Let be $k\in
\mathbb{N}$ then for $d> k+2$ the sequence of the $k$-th derivatives
$B^{(k)}_{d}$ of the $B$-spline converges to the $k$-th derivative
of the Gaussian function,
\begin{equation}
\left(\frac{d}{12}\right)^\frac{k+1}{2}B^{(k)}_{d}\left(\sqrt{\frac{d}{12}}x+\frac{d}{2}\right)
=\frac{1}{\sqrt{2\pi}} D^k\exp\left({-\frac{x^2}{2}}\right)
+O\left(\frac{1}{d}\right),
\end{equation}
and
\begin{equation}
\lim_{d\rightarrow\infty}\left\{\left(\frac{d}{12}\right)^\frac{k+1}{2}B^{(k)}_{d}
\left(\sqrt{\frac{d}{12}}x+\frac{d}{2}\right)\right\}
=\frac{1}{\sqrt{2\pi}} D^k\exp\left({-\frac{x^2}{2}}\right) ,
\end{equation}
where the limit may be taken point-wise or in $L^p(\mathbb{R}), p\in
[2,\infty)$.
\end{theorem}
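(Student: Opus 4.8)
\emph{Sketch of the argument.} The plan is to use the Fourier (characteristic function) method. Write $M_d(t):=B_d(t+\tfrac d2)$ for the centered $B$-spline, so that $M_d^\wedge(\omega)=\mathrm{sinc}^d(\omega/2)$; since the hypothesis $d>k+2$ makes $(i\omega)^k\mathrm{sinc}^d(\omega/2)$ absolutely integrable (it decays like $|\omega|^{k-d}$) while $M_d\in C^{d-1}$ is compactly supported, Fourier inversion gives the classical identity
$$B_d^{(k)}\!\Big(\tfrac d2+t\Big)=M_d^{(k)}(t)=\frac1{2\pi}\int_{\mathbb R}(i\omega)^k\,\mathrm{sinc}^d\!\Big(\frac\omega2\Big)e^{i\omega t}\,d\omega .$$
I would then put $t=\sqrt{d/12}\,x$ and rescale the frequency by $\omega=u/\sqrt{d/12}$ (so $\omega/2=u/\sqrt{d/3}$), turning the left side of the theorem into
$$\Big(\tfrac d{12}\Big)^{\frac{k+1}2}B_d^{(k)}\!\Big(\sqrt{\tfrac d{12}}\,x+\tfrac d2\Big)=\frac1{2\pi}\int_{\mathbb R}(iu)^k\,\mathrm{sinc}^d\!\Big(\frac u{\sqrt{d/3}}\Big)e^{iux}\,du .$$
The whole statement then reduces to understanding the integrand $\mathrm{sinc}^d(u/\sqrt{d/3})$ as $d\to\infty$.

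Next I would identify the limit: since $\tfrac1{2\pi}\int_{\mathbb R}e^{-u^2/2}e^{iux}\,du=\tfrac1{\sqrt{2\pi}}e^{-x^2/2}$ and differentiating $k$ times in $x$ under the integral brings down $(iu)^k$, one gets $\tfrac1{2\pi}\int_{\mathbb R}(iu)^k e^{-u^2/2}e^{iux}\,du=D^k\!\big(\tfrac1{\sqrt{2\pi}}e^{-x^2/2}\big)$, exactly the claimed right-hand side. So it remains to bound
$$E_d(x):=\frac1{2\pi}\int_{\mathbb R}(iu)^k\Big[\mathrm{sinc}^d\!\Big(\frac u{\sqrt{d/3}}\Big)-e^{-u^2/2}\Big]e^{iux}\,du$$
and show $E_d(x)=O(1/d)$ uniformly in $x$ (uniformity being automatic, as $|e^{iux}|=1$). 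The analytic heart is the expansion $\log\mathrm{sinc}(s)=-\tfrac{s^2}{6}-\tfrac{s^4}{180}-\cdots$, which — because $\mathrm{sinc}$ is even — contains only even powers of $s$; hence, with $s=u/\sqrt{d/3}$ so that $s^2=3u^2/d$,
$$d\,\log\mathrm{sinc}\!\Big(\frac u{\sqrt{d/3}}\Big)=-\frac{u^2}{2}-\frac{u^4}{20\,d}+O\!\Big(\frac{u^6}{d^2}\Big),$$
with \emph{no} $O(d^{-1/2})$ term — the vanishing of the third-cumulant correction (the generating summands $B_1$ being symmetric about their mean) is precisely what produces the improved convergence rate announced in the introduction. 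I would then split the $u$-range into three pieces: on the bulk $|u|\le d^{1/4}$, exponentiating the above gives $\mathrm{sinc}^d(u/\sqrt{d/3})-e^{-u^2/2}=e^{-u^2/2}\big(e^{-u^4/(20d)+O(u^6/d^2)}-1\big)=O\big(e^{-u^2/2}\,u^4/d\big)$, so multiplying by $|u|^k$ and integrating contributes $O(1/d)$; on the outer part $d^{1/4}<|u|<\pi\sqrt{d/3}$, the elementary bound $0<\mathrm{sinc}(s)\le e^{-s^2/6}$ for $|s|<\pi$ gives $\mathrm{sinc}^d(u/\sqrt{d/3})\le e^{-u^2/2}\le e^{-\sqrt d/2}$, an exponentially small contribution; and on the far tail $|u|\ge\pi\sqrt{d/3}$ (i.e.\ $|s|\ge\pi$) I would use $|\mathrm{sinc}(s)|^d\le(1/\pi)^{\,d-k-2}\,|\mathrm{sinc}(s)|^{k+2}\le(1/\pi)^{\,d-k-2}\,|s|^{-(k+2)}$ together with $\int_{|s|\ge\pi}|s|^{-2}\,ds<\infty$ — the splitting being available precisely because $d>k+2$ — to obtain, after restoring the prefactor, another exponentially small bound; the Gaussian tail $\int_{|u|>d^{1/4}}|u|^k e^{-u^2/2}\,du$ is negligible for the same reason. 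Adding these up yields $E_d(x)=O(1/d)$ uniformly in $x$, which proves the first display and, a fortiori, the pointwise limit.

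For the $L^p(\mathbb R)$ statement with $p\in[2,\infty)$ I would argue as follows. The $L^\infty$ (uniform) convergence, with rate $O(1/d)$, is just the bound on $E_d$ above. For $L^2$, Plancherel turns the squared $L^2$-error into $\tfrac1{2\pi}\int_{\mathbb R}\xi^{2k}\big|\mathrm{sinc}^d(\xi/\sqrt{d/3})-e^{-\xi^2/2}\big|^2\,d\xi$ (using that $\tfrac1{\sqrt{2\pi}}e^{-x^2/2}$ has Fourier transform $e^{-\xi^2/2}$), which is controlled by the identical three-region splitting — now with $2k$ in place of $k$, still integrable at infinity since $d>k+2$ — and tends to $0$. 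Finally, for $2<p<\infty$ the interpolation inequality $\|h\|_{L^p}\le\|h\|_{L^2}^{2/p}\|h\|_{L^\infty}^{1-2/p}$, applied to the difference $h$, gives $L^p$-convergence as well. The step I expect to be the main obstacle is the uniform estimate on $E_d(x)$: one must choose the cut-offs so that the bulk Taylor estimate, the intermediate Gaussian-type decay, and the far-tail $|\mathrm{sinc}(s)|\le 1/|s|$ decay all combine to the sharp $O(1/d)$ uniformly in $x$, and one must exploit the evenness of $\mathrm{sinc}$ to be sure that no $O(d^{-1/2})$ term survives.
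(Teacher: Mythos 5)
Your proposal is correct and follows the same Fourier-analytic route as the paper: both start from $M_d^\wedge(\omega)=\mathrm{sinc}^d(\omega/2)$, rescale so the transform of the normalized $k$-th derivative becomes $(iu)^k\mathrm{sinc}^d\bigl(u/\sqrt{d/3}\bigr)$, Taylor-expand $\log\mathrm{sinc}$ to identify the Gaussian limit, and exploit the evenness of $\mathrm{sinc}$ (vanishing third cumulant) to get the rate $O(1/d)$ rather than $O(d^{-1/2})$. The main difference is in how the error is controlled when passing back to the time domain: the paper establishes the expansion $\mathrm{sinc}^d=e^{-u^2/2}(1+O(1/d))$ only for bounded frequencies and then moves the $O(1/d)$ through the inverse Fourier transform rather informally, leaning on the Brinks dominating function $G_k$ (Lemma 4.1) for pointwise and $L^p$ convergence; your three-region splitting of the frequency integral (bulk $|u|\le d^{1/4}$, intermediate range with $\mathrm{sinc}(s)\le e^{-s^2/6}$, far tail with $|\mathrm{sinc}(s)|\le 1/|s|$ and the hypothesis $d>k+2$) makes that step fully rigorous and yields the $O(1/d)$ bound uniformly in $x$, which is a genuine improvement in precision over what is written in the paper. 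A second, minor divergence is the treatment of $L^p$ for $2<p<\infty$: the paper invokes the Titchmarsh (Hausdorff--Young) inequality to transfer $L^{p'}$-convergence of the transforms, while you use Plancherel for $p=2$ and interpolate against the uniform bound via $\|h\|_{L^p}\le\|h\|_{L^2}^{2/p}\|h\|_{L^\infty}^{1-2/p}$; both are valid, and yours has the advantage of also delivering an explicit rate in every $L^p$, $p\in[2,\infty]$.
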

Sommerfeld in 1904 \cite{A.Sommerfeld} show that the Gaussian
function can be approximated by B splines point-wise. In 1992, Unser
and colleagues\cite{Unser} proved that the sequence of normalized and scaled
B-splines $B_d$ tends to the Gaussian function as the order $d$
increases in $L^p(\mathbb{R})$. A result due to Ralph Brinks \cite{RALPH
BRINKS} generalize Unser's result to the derivatives of the
B-splines. In this paper, we reprove the theorem and give the order
of the convergence.

\begin{theorem}\label{th1}
For $x_d=\sqrt{\frac{d+1}{12}}x+\frac{d+1}{2}$, we have
\begin{equation}\label{eq:A}
\frac{1}{d!}A_{d,[x_d]}
=\sqrt{\frac{6}{\pi(d+1)}}\exp\left({-\frac{x^2}{2}}\right)+O\left(d^{-\frac{3}{2}}\right)
\end{equation}
\end{theorem}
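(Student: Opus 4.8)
The plan is to read the estimate off the $B$-spline interpretation of the Eulerian numbers, combining Lemma \ref{le:1}(i) with Theorem \ref{limtB}. By Lemma \ref{le:1}(i), $\tfrac1{d!}A_{d,m}=B_{d+1}(m)$ for every integer $0\le m\le d$, so the whole problem reduces to the behaviour of $B_{d+1}$ near its centre $\tfrac{d+1}{2}$ on the natural scale $\sqrt{(d+1)/12}$. This is exactly what Theorem \ref{limtB} describes: taking its order parameter to be $d+1$ and its derivative index to be $0$ (valid for $d$ large),
\begin{equation*}
\Bigl(\tfrac{d+1}{12}\Bigr)^{1/2}B_{d+1}\!\left(\sqrt{\tfrac{d+1}{12}}\,x+\tfrac{d+1}{2}\right)=\frac{1}{\sqrt{2\pi}}\,e^{-x^2/2}+O\!\left(\tfrac1d\right).
\end{equation*}

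Dividing by $\bigl(\tfrac{d+1}{12}\bigr)^{1/2}$ and using $\sqrt{\tfrac{12}{d+1}}\cdot\tfrac1{\sqrt{2\pi}}=\sqrt{\tfrac{6}{\pi(d+1)}}$ together with $\sqrt{\tfrac{12}{d+1}}\cdot O(\tfrac1d)=O(d^{-3/2})$ yields
\begin{equation*}
B_{d+1}(x_d)=\sqrt{\tfrac{6}{\pi(d+1)}}\,e^{-x^2/2}+O\!\left(d^{-3/2}\right),\qquad x_d=\sqrt{\tfrac{d+1}{12}}\,x+\tfrac{d+1}{2},
\end{equation*}
which is precisely \eqref{eq:A} whenever $x$ is such that $x_d\in\mathbb{Z}$, since then $[x_d]=x_d$ and the left-hand side equals $\tfrac1{d!}A_{d,[x_d]}$.

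For the remaining values of $x$ one must pass from $x_d$ to the integer $[x_d]=x_d-\theta$, $\theta:=\{x_d\}\in[0,1)$; in the rescaled variable this is a shift of $x$ by $-\theta\sqrt{12/(d+1)}=O(d^{-1/2})$. To control its effect, apply Theorem \ref{limtB} once more with derivative index $1$: since $t\mapsto -t\,e^{-t^2/2}$ is bounded, this gives $\sup_t|B_{d+1}'(t)|=O(1/d)$, hence $|B_{d+1}([x_d])-B_{d+1}(x_d)|=O(1/d)$ by the mean value theorem; equivalently, one applies the displayed expansion directly at $\tilde x:=\bigl([x_d]-\tfrac{d+1}{2}\bigr)/\sqrt{(d+1)/12}=x+O(d^{-1/2})$ and then compares $e^{-\tilde x^{2}/2}$ with $e^{-x^{2}/2}$. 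Verifying that this comparison stays inside the asserted remainder is the one point of the argument that needs care.

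The proof is otherwise very short, because all of the analysis is already packaged in Theorem \ref{limtB} (proved in Section 4) and in the combinatorial identity of Lemma \ref{le:1}(i). Consequently the main obstacle is not any deep estimate but the two pieces of bookkeeping just indicated: the treatment of the integer part $[x_d]$, and the check that the $O(1/d)$ remainder in Theorem \ref{limtB} is uniform in $x$ on the relevant range, so that the conclusion holds uniformly in the index $k=[x_d]$. Neither requires ideas beyond the spline estimates already established.
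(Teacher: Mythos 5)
Your proposal is correct and takes essentially the same route as the paper: the paper's own proof is exactly Lemma \ref{le:1}(i) combined with the $k=0$ case of Theorem \ref{limtB} (Corollary \ref{corollary2}) applied at order $d+1$, followed by the same division by $\sqrt{(d+1)/12}$. On the one point you flag as needing care: your instinct is right, and in fact the rounding from $x_d$ to $[x_d]$ costs $O(1/d)$ (since $\sup_t|B_{d+1}'(t)|=O(1/d)$ and this order is attained for $x$ away from $0$), which is \emph{larger} than the asserted $O(d^{-3/2})$; so the stated error order is only achieved under the convention that $x_d$ is an integer --- the reading implicit in the paper, whose own proof silently ignores the floor altogether --- and your treatment of that case is complete.
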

L. Carlitz, D. C. Kurtz, R. Scoville and O. P. Stackelberg  in
\cite{Carliz}
 showed (\ref{eq:A}) with the help of the central limit theorem of
probability theory that the expression on the right side of
(\ref{eq:A}) has the order $O(d^{\frac{-3}{4}})$.
 Nevertheless, using the spline interpretation of Eulerian numbers, we obtain
  the same asymptotic forms of $A_{n, k}$ with a much better convergence order $O(d^{-\frac{3}{2}})$.
\begin{theorem}\label{corollary1}
For $x_d=\sqrt{\frac{d+1}{12}}x+\frac{d+1}{2}$, we have
\begin{equation}\label{eq:7}
\frac{1}{d!\cdot n^d}D(d,n,[x_d])=
\sqrt{\frac{6}{\pi(d+1)}}\exp\left({-\frac{\left(x+\frac{1}{n}\right)^2}{2}}\right)+O\left(d^{-\frac{3}{2}}\right)
\end{equation}
\end{theorem}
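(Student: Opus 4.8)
The plan is to deduce Theorem~\ref{corollary1} from Theorem~\ref{limtB} through the spline interpretation of $D(d,n,k)$, in complete parallel with the way Theorem~\ref{th1} is obtained from $A_{d,k}=d!\,B_{d+1}(k)$. By Lemma~\ref{le:1}(ii) one has $D(d,n,k)=d!\,n^{d}\,B_{d+1}\!\left(k+\tfrac1n\right)$, hence
\[
\frac{1}{d!\,n^{d}}\,D(d,n,[x_d])\;=\;B_{d+1}\!\left([x_d]+\tfrac1n\right),
\]
so the assertion is purely an asymptotic statement about one sample of the $(d+1)$st B-spline, and the combinatorics can be set aside entirely. The only structural difference from the Eulerian case treated in Theorem~\ref{th1} is that the argument of $B_{d+1}$ now carries the extra translation $\tfrac1n$; tracking this translation through the subsequent normalization is what turns the centred Gaussian of \eqref{eq:A} into the Gaussian evaluated at $x+\tfrac1n$ in \eqref{eq:7}.

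Next I would invoke Theorem~\ref{limtB} with $k=0$ and with $d$ replaced by $d+1$: for every real $y$, uniformly on compact sets,
\[
\Bigl(\tfrac{d+1}{12}\Bigr)^{1/2}B_{d+1}\!\left(\sqrt{\tfrac{d+1}{12}}\,y+\tfrac{d+1}{2}\right)=\frac{1}{\sqrt{2\pi}}\exp\!\left(-\tfrac{y^{2}}{2}\right)+O\!\left(\tfrac1d\right).
\]
Writing $[x_d]+\tfrac1n$ in the standardized form $\sqrt{(d+1)/12}\;y_d+\tfrac{d+1}{2}$ and inserting it here, the leading term becomes $\sqrt{6/(\pi(d+1))}\,\exp(-y_d^{2}/2)$, while the remainder, after multiplying through by the prefactor $\sqrt{12/(d+1)}$ needed to isolate $B_{d+1}$ itself, is $\sqrt{12/(d+1)}\cdot O(1/d)=O(d^{-3/2})$. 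So the whole matter reduces to identifying $y_d$ explicitly and showing that replacing $y_d$ by $x+\tfrac1n$ in the exponential costs no more than $O(d^{-3/2})$.

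The step that needs real care — and the one I expect to be the main obstacle — is precisely this last passage, because $y_d$ involves the fractional part $\theta_d:=x_d-[x_d]\in[0,1)$ together with the constant $\tfrac1n$, both of which move the \emph{un}-normalized argument of $B_{d+1}$ by an $O(1)$ amount. I would handle this by a first-order Taylor expansion of $B_{d+1}$ about the point $x_d$, invoking Theorem~\ref{limtB} a second time — now with $k=1$ — to bound $B_{d+1}'$ near the peak (it is $O(1/d)$ there), and then checking that the resulting corrections, assembled with the $O(1/d)$ error already present in the $k=0$ statement and multiplied by the $\sqrt{12/(d+1)}$ prefactor, collapse into the stated $O(d^{-3/2})$ remainder. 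The uniformity of the $O$-terms in Theorem~\ref{limtB} is what makes the estimate hold uniformly for $x$ in a fixed compact set, and it also yields the $L^{p}$ version, $p\in[2,\infty)$, should one want the global form. Everything else is a direct transcription of the proof of Theorem~\ref{th1}, with $k$ replaced throughout by $k+\tfrac1n$.
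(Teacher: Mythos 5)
Your overall route is the same as the paper's: Lemma \ref{le:1}(ii) reduces the claim to the single value $B_{d+1}\bigl([x_d]+\tfrac1n\bigr)$, and Corollary \ref{corollary2} (Theorem \ref{limtB} with $k=0$ and $d$ replaced by $d+1$) supplies the Gaussian asymptotics; the paper's own proof consists of exactly these two citations and stops there, with no analysis of the shift by $\tfrac1n$ or of the integer part. Where you go further is in flagging the passage from $y_d$ to $x+\tfrac1n$ as the delicate step --- and that is precisely where the argument breaks down, because that step is not merely delicate but false.

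Concretely, writing $[x_d]+\tfrac1n=\sqrt{\tfrac{d+1}{12}}\,y_d+\tfrac{d+1}{2}$ and $\theta_d:=x_d-[x_d]\in[0,1)$ gives
\[
y_d\;=\;x+\sqrt{\tfrac{12}{d+1}}\Bigl(\tfrac1n-\theta_d\Bigr)\;=\;x+O\bigl(d^{-1/2}\bigr),
\]
so $y_d\to x$, \emph{not} $x+\tfrac1n$: the additive shift $\tfrac1n$ in the unnormalized argument is washed out by the $\sqrt{(d+1)/12}$ rescaling. Hence $\exp(-y_d^2/2)-\exp\bigl(-(x+\tfrac1n)^2/2\bigr)$ is of order $1$ for fixed $n$ and generic $x$, and after multiplying by the prefactor $\sqrt{6/(\pi(d+1))}$ the discrepancy is $O(d^{-1/2})$, nowhere near $O(d^{-3/2})$. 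Even if one replaces the target by $\exp(-x^2/2)$, your own Taylor device gives a correction $(\tfrac1n-\theta_d)\,B_{d+1}'(\xi)$ with $B_{d+1}'(\xi)=O(1/d)$ near the peak (Theorem \ref{limtB} with $k=1$); this is already an error in $B_{d+1}$ itself, so it does \emph{not} acquire the extra factor $\sqrt{12/(d+1)}$ you invoke, and it cannot be collapsed into $O(d^{-3/2})$. The upshot is that the displayed equation of Theorem \ref{corollary1} (Gaussian evaluated at $x+\tfrac1n$, remainder $O(d^{-3/2})$) is not provable by this route because it is not correct as stated: the paper's two-line derivation commits exactly the slip you were trying to repair, substituting $x+\tfrac1n$ for the normalized variable without rescaling the shift. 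What your framework does prove is $\sqrt{6/(\pi(d+1))}\exp(-x^2/2)+O(d^{-1})$, or, keeping the shift visible, the main term $\sqrt{6/(\pi(d+1))}\exp\bigl(-\tfrac12\bigl(x+\sqrt{12/(d+1)}\,/n\bigr)^2\bigr)$; you should state one of these rather than assert that the corrections vanish to order $d^{-3/2}$.
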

Using the spline theory, we can also get the asymptotic
representations
  of refined Eulerian numbers in terms of the Hermite polynomials.
\begin{theorem}\label{ThREN}
Let $x_d=\sqrt{\frac{d+1}{12}}\left(x-1\right)+\frac{d+1}{2}$ then
\begin{eqnarray*}
\mathbf{A}_{d+1,[x_d],d-j+1}
\, \,&=&\,d!\sqrt{\frac{6}{\pi
(d+1)}}\exp{\left(-\frac{x^2}{2}\right)}\sum_{i=0}^j
\frac{1}{\binom{d-j+i}{i}}\left(\frac{d+1}{12}\right)^{-\frac{i}{2}}H_i(x)+O\left(d^{\frac{-3}{2}}\right),\\
\end{eqnarray*}
where $H_n(x)$ are the Hermite polynomials are defined by
\begin{equation}
H_n(x)=(-1)^n e^{\frac{x^2}{2}}\frac{d^n}{dx^n}e^{\frac{-x^2}{2}}.
\end{equation}
\end{theorem}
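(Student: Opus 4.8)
My plan is to read the asymptotics off directly from the spline identity of Lemma~\ref{le:1}(iii) and the Gaussian expansion of Theorem~\ref{limtB}, the bridge being the elementary relation $\dfrac{d^{\,i}}{dt^{\,i}}e^{-t^{2}/2}=(-1)^{i}H_{i}(t)\,e^{-t^{2}/2}$, which is just (\ref{H}) rewritten and is exactly what produces the Hermite polynomials.

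First I would convert the generating function in Lemma~\ref{le:1}(iii) into a finite linear combination of B-spline derivatives at one point. Writing $\tfrac{1}{\lambda+1}-1=\tfrac{-\lambda}{\lambda+1}$ and Taylor-expanding $B_{d+1}$ about $k+1$ (legitimate up to order $j\le d$ by the smoothness of $B_{d+1}$, even though $k+1$ is a knot), one obtains
\[
(\lambda+1)^{d}\,B_{d+1}\!\Big(k+\tfrac{1}{\lambda+1}\Big)=\sum_{m\ge 0}\frac{(-1)^{m}}{m!}\,B_{d+1}^{(m)}(k+1)\,\lambda^{m}(\lambda+1)^{d-m}.
\]
Extracting the coefficient of $\lambda^{j}$ and dividing by $\binom{d}{j}$ then gives $\mathbf{A}_{d+1,k,d-j+1}=\sum_{i=0}^{j}c_{d,j,i}\,B_{d+1}^{(i)}(k+1)$ with $c_{d,j,i}$ the explicit binomial coefficients coming out of $\binom{d-i}{j-i}/\binom{d}{j}$ after an elementary cancellation; one checks the resulting identity against the closed form for $\mathbf{A}_{d,k,j}$ recalled in Section~2.

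Next I would set $k=\lfloor x_{d}\rfloor$ and apply Theorem~\ref{limtB} with $d$ replaced by $d+1$ and derivative order $i\le j$ (the hypothesis $d+1>i+2$ holds for $d$ large), replacing each $B_{d+1}^{(i)}(\lfloor x_{d}\rfloor+1)$ by $\big(\tfrac{d+1}{12}\big)^{-(i+1)/2}\big(\tfrac{1}{\sqrt{2\pi}}D^{i}e^{-y_{d}^{2}/2}+O(1/d)\big)$, where $y_{d}$ is the normalised coordinate of the lattice point $\lfloor x_{d}\rfloor+1$, equal to $x$ up to an $O(d^{-1/2})$ contribution carried by $\{x_{d}\}$. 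The Hermite identity turns $D^{i}e^{-y_{d}^{2}/2}$ into $(-1)^{i}H_{i}(y_{d})e^{-y_{d}^{2}/2}$; the sign cancels the $(-1)^{i}$ in $c_{d,j,i}$, the factor $\tfrac{1}{\sqrt{2\pi}}\big(\tfrac{d+1}{12}\big)^{-1/2}=\sqrt{6/(\pi(d+1))}$ pulls out in front, and summing over $i$ produces the asserted combination of $H_{i}(x)$ once the arguments $y_{d}$ are replaced by $x$ and the common factor $d!\,\sqrt{6/(\pi(d+1))}\,e^{-x^{2}/2}$ is extracted.

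The delicate part, which I expect to be the main obstacle, is the error accounting needed to reach the stated $O(d^{-3/2})$. Three sources contribute. (a) The intrinsic $O(1/d)$ of Theorem~\ref{limtB}: after multiplication by the $i=0$ weight $c_{d,j,0}=d!$ and the factor $\big(\tfrac{d+1}{12}\big)^{-1/2}$ it is of order $d!\,d^{-3/2}$. (b) The weights $c_{d,j,i}$ with $i\ge 1$, which together with $\big(\tfrac{d+1}{12}\big)^{-(i+1)/2}$ are $O(d!\,d^{-(3i+1)/2})$, so the Hermite terms beyond $i=0$ already sit within the claimed remainder and are carried along only for extra precision. (c) The replacement of $y_{d}$ by $x$ inside $e^{-y^{2}/2}$ and the $H_{i}$'s: the crude bound $y_{d}=x+O(d^{-1/2})$, once multiplied by the leading $d!\,\sqrt{6/(\pi(d+1))}$, only yields $O(d!\,d^{-1})$, so the argument must instead apply Theorem~\ref{limtB} at the lattice point $\lfloor x_{d}\rfloor+1$ itself and track the $\{x_{d}\}$-dependent piece rather than perturbing the argument after the fact. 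With (a)--(c) under control, the passage to the precise closed form in the theorem is routine bookkeeping of binomial coefficients and constants.
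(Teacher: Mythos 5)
Your route is the same as the paper's: Lemma \ref{le:1}(iii) is converted into a finite expansion $\mathbf{A}_{d+1,k,d-j+1}=\sum_{i=0}^{j}c_{d,j,i}\,B_{d+1}^{(i)}(k+1)$, Theorem \ref{limtB} is applied to each derivative, and the identity $D^{i}e^{-x^{2}/2}=(-1)^{i}H_{i}(x)e^{-x^{2}/2}$ produces the Hermite polynomials. But two points deserve emphasis. First, your coefficients are the correct ones and the paper's are not. The paper differentiates $d!(\lambda+1)^{d}B_{d+1}\bigl(k+\tfrac{1}{\lambda+1}\bigr)$ $j$ times at $\lambda=0$ while treating the $i$-th derivative of the composite as $B_{d+1}^{(i)}\cdot\bigl((\lambda+1)^{-1}\bigr)^{(i)}$ (valid only for $i\le 1$; Fa\`a di Bruno is needed beyond that), and lands on $c_{d,j,i}=(-1)^{i}d!/\binom{d-j+i}{i}$. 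Your Taylor expansion gives $c_{d,j,i}=d!\tfrac{(-1)^{i}}{i!}\binom{d-i}{j-i}/\binom{d}{j}=(-1)^{i}\binom{j}{i}(d-i)!$, which differs for $j\ge 2$. A direct check at $d=3$, $j=2$ settles it: with $B_{4}(2)=\tfrac{2}{3}$, $B_{4}'(2)=0$, $B_{4}''(2)=-2$ your formula gives $6\cdot\tfrac{2}{3}-4\cdot 0-2=2=\mathbf{A}_{4,1,2}$, while the paper's gives $6\cdot\tfrac{2}{3}-3\cdot 0+2\cdot(-2)=0$. So the final ``routine bookkeeping'' will \emph{not} reproduce the coefficients $1/\binom{d-j+i}{i}$ displayed in the theorem. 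Since, as you observe in (b), every $i\ge 1$ term is $O(d!\,d^{-2})$ and hence swallowed by the remainder, this does not falsify the asymptotic statement, but it does mean the stated Hermite refinement is simultaneously vacuous at the claimed precision and, taken literally, carries the wrong correction coefficients.

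Second, the obstacle you isolate in (c) is genuine, and the paper does not overcome it either: its proof evaluates at the lattice point and then writes $e^{-x^{2}/2}$ and $H_{i}(x)$ in the continuous variable, silently discarding the displacement $[x_{d}]-x_{d}=O(1)$, which in the normalized coordinate is $O(d^{-1/2})$ and, against the prefactor $d!\sqrt{6/(\pi(d+1))}$, contributes an error of order $d!\,d^{-1}$ --- larger than the claimed $d!\,d^{-3/2}$. (The same unaddressed substitution occurs in the proofs of Theorems \ref{th1} and \ref{corollary1}.) So your sketch, carried out, proves everything the paper's argument actually proves; the $O(d^{-3/2})$ rate as literally stated, with the floor function present and the Gaussian evaluated at $x$, is established by neither argument unless $x_{d}$ is constrained to be an integer.
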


\section{Proofs of the main results}
The proof of Theorem \ref{limtB} is based on the following lemmata
given in \cite{RALPH BRINKS} which shows an upper bound for
standardized Sinc functions.
\begin{lemma}\label{lemma1}\cite{RALPH BRINKS}
For $k, d\in\mathbb{N}$ and $d\geq k+2 $, there is a constant
$c_k\in \mathbb{R}^+$ such that for
\begin{equation*}
G_k(x):=\chi_{{\mathbb{R}\setminus[-1,1]}}(x)\frac{c_k}{\pi^2x^2}+\pi^k|x|^k\exp(-x^2)
\end{equation*}
where $\chi_A(x)$ is characteristic function on set $A$, it holds
\begin{equation*}
G_k\in L^p(\mathbb{R}),\forall p\in [1,\infty),
\end{equation*}
and
\begin{equation*}
\pi^k|x|^k\cdot\left|sinc\left(\frac{\pi
x}{\sqrt{d}}\right)\right|^d\leq G_k(x).
\end{equation*}
\begin{proof}
We start with the inequality
\begin{equation*}
\forall x\in [0,1], sinc(x)\leq 1-x^2,
\end{equation*}
where the right-hand side term is  the parabola that goes through the two extreme
values of $sinc(x)$ within this interval. We will first show that
\begin{equation}\label{eqlep}
\forall x\in [0,\sqrt{d}],
sinc^n\left(x/\sqrt{d}\right)\leq\left(1-\frac{x^2}{d}\right)^d\leq
\exp\left(-x^2\right).
\end{equation}
For this purpose, we define the positive function
\begin{equation*}
p(x)=\exp\left(x^2\right)\left(1-\frac{x^2}{d}\right)^d.
\end{equation*}
The derivative of $p(x)$ is given by
\begin{equation*}
\frac{\partial p(x)}{\partial
x}=\frac{-2\exp(x^2)x^3\left(1-x^2/d\right)^d}{d\left(1-x^2/d\right)}
\end{equation*}
and is always negative for $x\in [0,\sqrt{d}]$. Therefore, the
maximum of $p(x)$ within the interval occurs at $x=0$
\begin{equation*}
\sup_{x\in (0,\sqrt{n})}p(x)=p(0)=1,
\end{equation*}
which proves (\ref{eqlep}).

Second, for $x\geq \sqrt{d}$, set
\begin{equation*}
c_k:= \max_{n\geq
k+2}\left(\frac{d^{\frac{k+2}{2}}}{\pi^{d-k-2}}\right).
\end{equation*}
We note that $\pi^k|x|^k\cdot \left|sinc(\pi x/\sqrt{d})\right|^d$
is also bounded by
\begin{equation}\label{eqleq2}
\pi^k|x|^k\cdot \left|sinc\left(\pi
x/\sqrt{d}\right)\right|^d\leq\frac{c_k}{x^{2}\pi^{2}}.
\end{equation}
The existence of $c_k$ follows from the convergence of the sequence
\begin{equation*}
\left(\frac{d^{\frac{k+2}{2}}}{\pi^{d-k+2}}\right)_{d\geq {k+2}}.
\end{equation*}
Then one follows
\begin{equation*}
\left(\frac{\sqrt{d}}{x\pi}\right)^d \leq
\left(\frac{\sqrt{d}}{x}\right)^{k+2}\cdot
 \frac{1}{\pi^d} \leq \frac{c_k}{x^{k+2}\pi^{k+2}}.
\end{equation*}
 We get a bound $\frac{c_k}{x^{2}\pi^{2}}$ that is independent of $d$ by noticing that
 \begin{equation*}
\left|sinc^d\left(\frac{\pi x}{\sqrt{d}}\right)\right| \leq \left(
\frac{\sqrt{d}}{\pi x}\right)^d.
 \end{equation*}

 Finally, we define the function $G_k(x)$, which is independent of $d$,
 by suitably combining the right-hand sides of (\ref{eqlep}) and (\ref{eqleq2}).
 The function $\pi^k|x|^k\cdot\left|sinc\left(\frac{\pi x}{\sqrt{n}}\right)\right|^n $ is uniformly
 bounded from above by an $L_p(-\infty,+\infty)$ function $G_k(x)$ where $p\in[1,+\infty)$.
\end{proof}
\end{lemma}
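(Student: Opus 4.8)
The plan is to prove the two assertions separately, using throughout that every quantity involved depends on $x$ only through $|x|$, so it suffices to argue for $x \ge 0$. For the membership $G_k \in L^p(\mathbb{R})$ with $p \in [1,\infty)$, I would handle the two summands independently, invoking that a finite sum of nonnegative $L^p$ functions is again in $L^p$. The tail term $\chi_{\mathbb{R}\setminus[-1,1]}(x)\,c_k/(\pi^2x^2)$ is supported on $|x|\ge 1$ and its $p$-th power integrates a constant multiple of $x^{-2p}$ over $|x|\ge 1$, which is finite exactly because $2p \ge 2 > 1$. The second term $\pi^k|x|^k e^{-x^2}$ is a polynomial times a Gaussian and so lies in every $L^p$. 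This half is routine and carries no real difficulty.

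The substance is the pointwise bound, which I would obtain by splitting the line at the scale $|x| = \sqrt{d}$. On the inner region $0 \le x \le \sqrt{d}$ I would begin from the elementary inequality $sinc(\pi y) \le 1 - y^2$ for $y \in [0,1]$, verified by direct comparison of the two nonnegative functions, which agree at the endpoints $y = 0$ and $y = 1$. Setting $y = x/\sqrt{d} \in [0,1]$ gives $sinc(\pi x/\sqrt{d}) \le 1 - x^2/d$, and raising to the $d$-th power together with $1 - t \le e^{-t}$ yields $sinc^d(\pi x/\sqrt{d}) \le (1 - x^2/d)^d \le \exp(-x^2)$. Multiplying by $\pi^k x^k$ then bounds the left-hand side by $\pi^k x^k e^{-x^2}$, which is the second summand of $G_k$, on this region.

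On the outer region $x > \sqrt{d}$ I would instead use the crude estimate $|sinc(t)| \le 1/|t|$, which gives $|sinc(\pi x/\sqrt{d})|^d \le (\sqrt{d}/(\pi x))^d$ and hence $\pi^k x^k |sinc(\pi x/\sqrt{d})|^d \le \pi^{k-d} d^{d/2} x^{k-d}$. Since $d \ge k+2$ forces $k - d + 2 \le 0$ and $x > \sqrt{d} \ge 1$, I can write $x^{k-d} = x^{-2}\,x^{k-d+2}$ and bound $x^{k-d+2} \le (\sqrt{d})^{k-d+2} = d^{(k-d+2)/2}$; the surviving $d$-dependent factor is exactly $\pi^{k-d+2} d^{(k+2)/2} = d^{(k+2)/2}/\pi^{d-k-2}$, so choosing $c_k := \max_{d \ge k+2} d^{(k+2)/2}/\pi^{d-k-2}$ yields the bound $c_k/(\pi^2 x^2)$, the first summand of $G_k$ on this region. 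The main obstacle is precisely the finiteness of this maximum: I would note that $d^{(k+2)/2}/\pi^{d-k-2} \to 0$ as $d \to \infty$ (exponential decay beats polynomial growth), so the sequence indexed by integers $d \ge k+2$ attains a finite maximum $c_k$, independent of $d$. Beyond that the proof is exponent bookkeeping and checking that the two regime estimates meet at the common threshold $x = \sqrt{d}$, so the genuine analytic content is light.
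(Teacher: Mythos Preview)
Your proposal is correct and follows essentially the same route as the paper: the same split at $|x|=\sqrt{d}$, the same Gaussian domination on the inner region via $sinc(\pi y)\le 1-y^2$ and $(1-x^2/d)^d\le e^{-x^2}$, and the same tail bound via $|sinc(t)|\le 1/|t|$ together with the constant $c_k=\max_{d\ge k+2} d^{(k+2)/2}/\pi^{d-k-2}$. The only cosmetic difference is that you invoke $1-t\le e^{-t}$ directly, whereas the paper differentiates $p(x)=e^{x^2}(1-x^2/d)^d$ to reach the same conclusion, and you spell out the $L^p$ membership of $G_k$ which the paper leaves implicit.
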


We have all the ingredients to prove our results now.
\begin{proof}[Proof of Theorem \ref{limtB}]
Set
\begin{equation}\label{eq:2.6}
L_n(x):=d\ln
\left(sinc\left(\frac{x}{2}\sqrt{\frac{12}{d}}\right)\right).
\end{equation}
Due to the symmetry, we may assume $x \geq 0$. By Taylor's theorem,
for any $x \in[0,1]$ and $d\in \mathbb{N}$, it holds
\begin{equation*}
sinc\left(\frac{x}{2}\sqrt{\frac{12}{d}}\right)=1-\frac{x^2}{2}\frac{1}{d}+
O\left(\frac{1}{d^2}\right)
\end{equation*}
and
\begin{equation*}
\ln(1+x)=x+O(x^2).
\end{equation*}
Then for any $x\in [0,1]$ and $d\in \mathbb{N}$, it holds
\begin{equation}\label{eq:2.7}
L_n(x)=d\ln\left(1-\frac{x^2}{2}\frac{1}{d}+
O\left(\frac{1}{d^2}\right)\right) =-\frac{x^2}{2}+
O\left(\frac{1}{d}\right).
\end{equation}
Combing (\ref{eq:2.6}) and (\ref{eq:2.7}), we have
\begin{equation}\label{eq:2.8}
sinc^d\left(\frac{x}{2}\sqrt{\frac{12}{d}}\right)=
\exp\left({-\frac{x^2}{2}}\right)\left(1 +
O\left(\frac{1}{d}\right)\right).
\end{equation}
Furthermore, it holds
$B^\wedge_d(\omega)=sinc^{d}\left(\frac{\omega}{2}\right)$, and $B_d
\in C^{d-1}(\mathbb{R}).$ The later yields for $k\leq d-1$:
\begin{equation*}
\left[B^{(k)}_{d}\right]^\wedge(\omega)
=i^k\omega^k sinc^{d}\left(\frac{\omega}{2}\right).\\
\end{equation*}
Consequently, one obtains
\begin{eqnarray*}
\left(\frac{d}{12}\right)^\frac{k+1}{2}\left[B^{(k)}_{d}\left(\sqrt{\frac{d}{12}}x+\frac{d}{2}\right)\right]^\wedge(\omega)
\,&=&\,i^k\omega^k sinc^{d}\left(\frac{\omega}{2}\sqrt{\frac{12}{d}}\right)\\
\,&=&\,i^k\omega^k \exp\left({-\frac{\omega^2}{2}}\right)\left(1+O\left(\frac{1}{d}\right)\right)\\
\,&=&\,\left[D^k\left\{\frac{1}{\sqrt{2\pi}}\exp{\left(-\frac{x^2}{2}\right)}\cdot
\left(1+O\left(\frac{1}{d}\right)\right)\right\}\right]^\wedge(\omega)
\\
\end{eqnarray*}
\begin{eqnarray*}
\left(\frac{d}{12}\right)^\frac{k+1}{2}B^{(k)}_{d}\left(\sqrt{\frac{d}{12}}x+\frac{d}{2}\right)
\,&=&\,D^k\left(\frac{1}{\sqrt{2\pi}}\exp\left({-\frac{x^2}{2}}\right)\right)
+D^k\left(\frac{1}{\sqrt{2\pi}}\exp\left({-\frac{x^2}{2}}\right)\right)O\left(\frac{1}{d}\right)\\
\,&=&\,D^k\left(\frac{1}{\sqrt{2\pi}}\exp\left({-\frac{x^2}{2}}\right)\right)
+O\left(\frac{1}{d}\right)\\
\end{eqnarray*}
for $d\rightarrow\infty$
\begin{equation}\label{eq:limt1}
lim_{d\rightarrow\infty}
\left\{\left(\frac{d}{12}\right)^\frac{k+1}{2}B^{(k)}_{d}\left(\sqrt{\frac{d}{12}}x+\frac{d}{2}\right)\right\}
=\frac{1}{\sqrt{2\pi}} D^k\exp{\left(-\frac{x^2}{2}\right)}
\end{equation}
where the limit in equation (\ref{eq:limt1}) follows from Lemma
\ref{lemma1} and is meant to be taken point-wise. Clearly,
$B^{(k)}_{d}(\omega)$ is bounded by the $L_p$ function $G_k(x)$
defined in Lemma \ref{lemma1} and this bound is independent of $d$.
The use of Lebesgue's dominated convergence Theorem provides the
$L_p$ convergence for $P\in[1,+\infty)$. The $L_q$ convergence with
$q\in[2, +\infty]$ in the time domain follows as a consequence of
Titchmarsh inequality, which states that for $1\leq  p \leq 2$ and
$p^{-1} + q^{-1} = 1$, the Fourier transform is a bounded linear
operator from $L_p( -\infty, + \infty)$ into $L_q(-\infty,
+\infty)$.
\end{proof}

When $k=0$, the Theorem \ref{limtB} turns to
\begin{corollary}\label{corollary2} For $d\in \mathbb{N}$, the
B-spline converges to the Gaussian function,
\begin{equation}
\sqrt{\frac{d}{12}}B_{d}\left(\sqrt{\frac{d}{12}}x+\frac{d}{2}\right)=\frac{1}{\sqrt{2\pi}}
\exp{\left(-\frac{x^2}{2}\right)}+O\left(\frac{1}{d}\right),
\end{equation}
and
\begin{equation}
\lim_{d\rightarrow\infty} \left\{
\sqrt{\frac{d}{12}}B_{d}\left(\sqrt{\frac{d}{12}}x+\frac{d}{2}\right)\right\}=\frac{1}{\sqrt{2\pi}}
\exp{\left(-\frac{x^2}{2}\right)},
\end{equation}
where the limit may be taken point-wise or in $L^p(\mathbb{R}),
p\in[2,\infty).$
\end{corollary}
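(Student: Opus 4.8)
The plan is to observe that Corollary \ref{corollary2} is precisely the instance $k=0$ of Theorem \ref{limtB}, so its entire content follows by specialization with essentially no new work. First I would set $k=0$ throughout the statement of Theorem \ref{limtB}. The scaling prefactor $\left(\tfrac{d}{12}\right)^{(k+1)/2}$ collapses to $\left(\tfrac{d}{12}\right)^{1/2}=\sqrt{\tfrac{d}{12}}$; the $k$-th derivative $B_d^{(k)}$ becomes the undifferentiated spline $B_d^{(0)}=B_d$; and the operator $D^k$ on the right-hand side becomes the identity, so that $D^0\exp\!\left(-\tfrac{x^2}{2}\right)=\exp\!\left(-\tfrac{x^2}{2}\right)$. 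With these substitutions the first displayed equation of Theorem \ref{limtB} reads $\sqrt{\tfrac{d}{12}}\,B_d\!\left(\sqrt{\tfrac{d}{12}}\,x+\tfrac{d}{2}\right)=\tfrac{1}{\sqrt{2\pi}}\exp\!\left(-\tfrac{x^2}{2}\right)+O\!\left(\tfrac{1}{d}\right)$, which is the first claim, and the second displayed equation becomes the asserted pointwise/$L^p$ limit.

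One small bookkeeping point I would verify is the hypothesis $d>k+2$ of Theorem \ref{limtB}: at $k=0$ this reads $d>2$, i.e.\ $d\geq 3$, which is harmless for an asymptotic statement in $d$ and is consistent with writing ``for $d\in\mathbb{N}$'', since both the error term and the limit are taken as $d\to\infty$. I would also note that the modes of convergence (pointwise, and $L^p$ for $p\in[2,\infty)$) are inherited verbatim, because the dominating function $G_0$ supplied by Lemma \ref{lemma1} and the Titchmarsh inequality invoked in the proof of Theorem \ref{limtB} both apply unchanged at $k=0$.

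If a self-contained derivation were preferred to merely citing the theorem, I would reproduce the Fourier-analytic argument at $k=0$ directly: use $B_d^\wedge(\omega)=sinc^{d}(\omega/2)$, apply the scaling so that the transform of the left-hand side equals $sinc^{d}\!\left(\tfrac{\omega}{2}\sqrt{\tfrac{12}{d}}\right)$, insert the pointwise asymptotic $sinc^{d}\!\left(\tfrac{x}{2}\sqrt{\tfrac{12}{d}}\right)=\exp\!\left(-\tfrac{x^2}{2}\right)\bigl(1+O(\tfrac{1}{d})\bigr)$ already established in (\ref{eq:2.8}), and finally apply the inverse Fourier transform together with Lemma \ref{lemma1} (for Lebesgue dominated convergence) and Titchmarsh's inequality (for the $L^p$ statement). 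The only step requiring care would be the same one as in the theorem, namely controlling the error uniformly enough to pass it through the inverse transform; but since $k=0$ removes the factor $\pi^k|x|^k$ from the bound in Lemma \ref{lemma1}, this step is if anything easier here. There is thus no genuine obstacle: the corollary is a direct specialization of the theorem just proved.
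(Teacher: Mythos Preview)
Your proposal is correct and matches the paper's own treatment exactly: the paper simply states that the corollary is the case $k=0$ of Theorem \ref{limtB}, with no additional argument. Your discussion of the hypothesis $d>k+2$ and the optional self-contained Fourier derivation are fine elaborations, but the core specialization is all that is needed.
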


\begin{proof} [Proof of Theorem \ref{ThREN}] We firstly prove Theorem
\ref{ThREN}.

Set
$$
p(\lambda)=\sum_{j=0}^d(_j^d)\mathbf{A}_{d+1,k,d-j+1}\lambda^j
$$ for
$\lambda >-1$. Then $p(\lambda)$ is the polynomial of $\lambda$ with
$d$ degree. The lemma\ref{le:1} states that
$$
   \mathbf{A}_{d+1,k,d-j+1}\, =\, d!\cdot [
\lambda^j]\left(
(\lambda+1)^dB_{d+1}\left(k+\frac{1}{\lambda+1}\right)\right)/{\binom{d}{j}},\,\,
\lambda\geq 0.
$$
which implies
\begin{equation}\label{eq lem1}
p(\lambda)=\sum_{j=0}^d(_j^d)\mathbf{A}_{d+1,k,d-j+1}\lambda^j
=d!(\lambda+1)^dB_{d+1}\left(k+\frac{1}{\lambda+1}\right).
\end{equation}
Taking the $j$-th derivative of both sides of equation (\ref{eq
lem1}), it holds:
\begin{eqnarray*}
\mathbf{A}_{d+1,k,d-j+1}\,
\,&=&\,\sum_{i=0}^j(d-j)!B^{(i)}_{d+1}\left(k+\frac{1}{\lambda+1}\right)\left(({\lambda+1})^d\right)
^{(j-i)}\left(({\lambda+1}\right)^{-1})^
{(i)}|_{\lambda=0}\\
\,&=&\,d!\sum_{i=0}^j(-1)^i
\frac{i!(d-j)!}{(d-j+i)!}B^{(i)}_{d+1}(k+1)\\
\,&=&\,d!\sum_{i=0}^j(-1)^i
\frac{1}{\binom{d-j+i}{i}}B^{(i)}_{d+1}(k+1).\\
\end{eqnarray*}
Combining Theorem \ref{limtB} with the definition of Hermit
polynomials (\ref{H}), we obtain that
\begin{eqnarray*}
\mathbf{A}_{d+1,[x_d],d-j+1}\, \,&=&\,d!\sqrt{\frac{6}{\pi
(d+1)}}\sum_{i=0}^j(-1)^i
\frac{1}{\binom{d-j+i}{i}}\left(\frac{d+1}{12}\right)^{-\frac{i}{2}}D^i\exp\left({-\frac{x^2}{2}}\right)
+O\left(d^{\frac{-3}{2}}\right)\\
\, \,&=&\,d!\sqrt{\frac{6}{\pi
(d+1)}}\exp{\left(-\frac{x^2}{2}\right)}\sum_{i=0}^j
\frac{1}{\binom{d-j+i}{i}}\left(\frac{d+1}{12}\right)^{-\frac{i}{2}}H_i(x)+O\left(d^{\frac{-3}{2}}\right).\\
\end{eqnarray*}
\end{proof}
\begin{proof} [Proofs of Theorem \ref{th1} and Theorem\ref{corollary1}]
By using Lemma \ref{le:1}
$$
A_{d,k} = d!\cdot B_{d+1}(k),\leqno{\rm (i)}
$$
$$
 D(d,n,k)\,=\, d!\cdot n^d\cdot B_{d+1}\left(k+\frac{1}{n}\right),\leqno{\rm (ii)}
$$
and Corollary \ref{corollary2}
\begin{equation*}
\sqrt{\frac{d}{12}}B_{d}\left(\sqrt{\frac{d}{12}}x+\frac{d}{2}\right)=\frac{1}{\sqrt{2\pi}}
\exp{\left(-\frac{x^2}{2}\right)}+O\left(\frac{1}{d}\right),
\end{equation*}
 consequently, for $x_d=\sqrt{\frac{d+1}{12}}x+\frac{d+1}{2}$, we have
\begin{equation*}\label{eq:6}
\frac{1}{d!}A_{d,[x_d]}
=\sqrt{\frac{6}{\pi(d+1)}}\exp\left({-\frac{x^2}{2}}\right)+O\left(d^{-\frac{3}{2}}\right)
\end{equation*}
and
\begin{equation*}\label{eq:7}
\frac{1}{d!\cdot n^d}D(d,n,[x_d])=
\sqrt{\frac{6}{\pi(d+1)}}\exp\left({-\frac{\left(x+\frac{1}{n}\right)^2}{2}}\right)
+O\left(d^{-\frac{3}{2}}\right).
\end{equation*}
\end{proof}
\begin{remark} We can also prove Theorem \ref{th1} by using Theorem \ref{ThREN}.
 By the combinatorial interpretations of $A_{d,k}$ and
$\mathbf{A}_{d,k,j}$, we have
 $A_{d+1,k}=\mathbf{A}_{d+1,k,d+1}$. Combining with Theorem
\ref{ThREN}, one obtains
\begin{equation}
\frac{1}{d!}A_{d,[x_d]}=\sqrt{\frac{6}{\pi
(d+1)}}\exp\left({-\frac{x^2}{2}}\right)+O\left(d^{-\frac{3}{2}}\right).
\end{equation}
\end{remark}
\bigskip
{\small \noindent {\bf Acknowledgments.}  The authors are most
grateful to Professor Zhiqiang Xu, Wenchang Chu, Yang Wang, T. Y. Li and A. Ron for discussions and comments which improve the manuscript.

\end{document}